\newtheorem{theorem}{Theorem}
\newtheorem{lemma}[theorem]{Lemma}
\newtheorem{open}{Open Problem}
\newcommand{\dist}{\mathrm{dist}}
\newcommand{\rev}{\mathrm{rev}}
\begin{document}

\title{Subsets of free groups with distinct differences}
\author{Simon R. Blackburn, Emma Smith\thanks{Research supported by UKRI and EPSRC as part of the Centre for Doctoral Training in Cyber Security at Royal Holloway, University of London (Grant Ref. EP/SO21817/1).} \ and Luke Stewart\footnotemark[1]\\
Department of Mathematics\\
Royal Holloway University of London\\
Egham, Surrey TW20 0EZ, United Kingdom}
\maketitle

\begin{abstract}
Let $F_n$ be a free group of rank $n$, with free generating set $X$. A subset $D$ of $F_n$ is a \emph{Distinct Difference Configuration} if the differences $g^{-1}h$ are distinct, where $g$ and $h$ range over all (ordered) pairs of distinct elements of $D$. The subset $D$ has diameter at most $d$ if these differences all have length at most $d$. When $n$ is fixed and $d$ is large, the paper shows that the largest distinct difference configuration in $F_n$ of diameter at most $d$ has size approximately $(2n-1)^{d/3}$. 
\end{abstract}

\section{Introduction}
\label{sec:introduction}

Let $G$ be a group. A subset $D$ of $G$ is a \emph{Distinct Difference Configuration}, or \emph{DDC}, if the differences $g^{-1}h$ are distinct, where $g$ and $h$ range over all (ordered) pairs of distinct elements of $D$. For example, when $G$ is the free group $F_X$ on a set $X$, then $D=X\cup X^{-1}$ is a DDC (because the word $g^{-1}h$ for distinct $g,h\in D$ is always reduced, so $g$ and $h$ can be recovered from the reduced representative of the difference). However, the set $X\cup X^{-1}\cup\{e\}$, where $e$ is the identity element of $G$, is not a DDC since the difference of $x$ and $e$ and the difference of $e$ and $x^{-1}$ are equal for any element $x\in X$. As another example of a DDC, in a finite group: the subset $\{1,2,4\}$ is a DCC in the additive group of integers modulo $7$.

Sets with distinct differences have been studied for many decades, beginning with the study of abelian groups. For example, in 1942 Bose~\cite{Bose} constructed a set of $q$ elements in a finite field $\mathbb{F}_{q^2}$ whose (additive) differences are distinct; see Colbourn and Dinitz~\cite[Section~IV.14.2]{ColbournDinitz}, where the term \emph{disjoint difference set} is used. Sidon sets, namely sets of positive integers whose pairwise sums are distinct, have been studied since the 1930s~\cite{Sidon32}, originally motivated by a problem in Fourier series. (Note that Sidon sets and distinct difference configurations are equivalent notions for abelian groups.)  Erd\H os and Turan~\cite{ErdosTuran} and Singer~\cite{Singer} showed that the maximum number of elements in a Sidon set contained in the interval $[1,x]$ is at least $(1-\epsilon)\sqrt{x}$ and is at most $\sqrt{x}+O(x^{1/4})$.  Difference sets, namely subsets $D$ of a group $G$ such that every non-identity element of $G$ occurs an equal number of times as the difference of elements of~$D$, have been intensively studied in both abelian and non-abelian cases; see Colbourn and Dinitz~\cite[Chapters~IV.12 and~IV.13]{ColbournDinitz}. Many other variants have been studied (motivated by design theory or other applications), for example Costas arrays~\cite{WarnkeCorrell}, Golomb rulers~\cite{Drakakis}, difference families~\cite[Chapter~IV.10]{ColbournDinitz} and difference matrices~\cite[Chapter~IV.11]{ColbournDinitz}.

The results of Erd\H os--Turan and Singer can be thought of as providing bounds on the cardinality of a distinct difference configuration $D\subseteq\mathbb{Z}$ of bounded diameter (so the distance $|h-g|$ between any two elements $g,h\in D$ is small). The natural analogue of diameter for general groups can be defined in terms of a Cayley graph of the group as follows. Let $X$ be a generating set for~$G$. For elements $g,h\in G$, the \emph{distance} $\dist(g,h)$ between $g$ and $h$ is defined to be the smallest non-negative integer $\ell$ such that $h=gy_1y_2\cdots y_\ell $, where $y_1,y_2,\ldots, y_\ell\in X\cup X^{-1}$. (So $\dist(g,h)$ is the distance between nodes labelled $g$ and $h$ in the Cayley graph on $G$ with generating set $X\cup X^{-1}$. Alternatively, $\dist(g,h)$ is the distance of the difference $g^{-1}h$ from the identity.) For a subset $D\subseteq G$, the \emph{diameter} of $D$ is the minimum integer~$d$ such that $\dist(g,h)\leq d$ for all $g,h\in D$ (if this integer exists).

Recall that if $\alpha$ and $\beta$ are positive real functions of a positive integer $d$, we write $\alpha=\Theta(\beta)$ to mean that there exist positive constants $c_1$ and $c_2$, independent of $d$, such that $c_1 \beta\leq\alpha \leq c_2 \beta$ whenever $d$ is sufficiently large.

For a group $G$ with a finite generating set $X$ of cardinality $n$, it is natural to ask what is the maximum cardinality $m$ of a DDC with diameter $d$. The work of Erd\H os--Turan and Singer implies that $m=\Theta(\sqrt{d})$ when $G=\mathbb{Z}$ and $X=\{1\}$. Blackburn et al.~\cite[Theorem~9, Corollary~27]{BlackburnEtzion10} show that when $G=\mathbb{Z}^2$ and $X=\{(1,0),(0,1)\}$ then $m=\Theta(d)$. Stewart~\cite[Chapter~8]{Stewart} shows that when $G=\mathbb{Z}^n$ and $X$ is the standard generating set of size $n$ then $m=\Theta(d^{n/2})$. To understand these results, we note that a DDC of diameter $d$ and cardinality $m$ gives rise to $m(m-1)$ distinct differences, all lying at distance at most $d$ from the identity $e$. Thus, $m(m-1)\leq |B_d(e)|$, where $B_d(e)$ is the \emph{ball of radius $d$ about $e$} (namely the set of all elements of $G$ at distance at most $d$ from  $e$). Since $|B_d(e)|=\Theta(d^n)$ when $G=\mathbb{Z}^n$, these results say that $m=\Theta(\sqrt{|B_d(e)|})$ in the free abelian group of rank $1$, $2$ and $n$ respectively. This can be interpreted as saying that the bound $m(m-1)\leq |B_d(e)|$ is an important restriction on the size of a DDC of diameter $d$ in these groups.

In this paper, we aim to investigate distinct difference configurations of fixed diameter when the group $G$ is a free (non-abelian) group of finite rank. The motivation for considering the free group in this paper is four-fold: First, solving the problem for a free group of rank $n$ has implications for all $n$-generator groups (see the discussion in Section~\ref{sec:conclusion}). Second, the free group is a natural group to consider when moving to the non-abelian case (in analogy to the free abelian cases considered previously). Thirdly, we believe the results are surprising. Our final motivation comes from an application to key distribution in wireless sensor networks, where sensors are arranged in a regular tree. (The paper~\cite{BlackburnEtzion08} was motivated by a similar application, where sensors are arranged in a grid.)

This paper proves the following theorem. 
\begin{theorem}
\label{thm:free}
Let  $F_X$ be a free group, freely generated by a set $X$ of \linebreak cardinality $n$, where $n\geq 2$. Let $m(n,d)$ be the maximum cardinality of a DDC of diameter at most $d$ in  $F_X$. As $d\rightarrow\infty$ with $n$ fixed, then
\[
m(n,d)= (2n-1)^{d/3+O(\log d)}
\]
{\rm (}where the implicit constants might depend on $n${\rm )}.
\end{theorem}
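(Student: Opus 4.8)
The plan is to pass to spheres in the Cayley tree, prove matching bounds there, and translate back. Since a finite set of diameter $d$ in a tree lies in the ball $B_{\lceil d/2\rceil}(c)$ about the centre $c$ of its Steiner tree, and left‑translation preserves both the DDC property and ball sizes, $m(n,d)\le f(\lceil d/2\rceil)$ where $f(r)$ is the largest DDC inside $B_r(e)$; conversely $f(r)\le m(n,2r)$. Writing $B_r(e)=\bigsqcup_{\ell\le r}S_\ell(e)$ and keeping only the fullest sphere costs a factor $r+1$, so it suffices to show $h(r)=(2n-1)^{2r/3+O(\log r)}$, where $h(r)$ is the largest DDC contained in a sphere $S_r(e)$. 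Throughout I use that $D$ is a DDC iff $|aD\cap D|\le 1$ for all $a\ne e$, equivalently iff $D^{-1}D$ meets every non‑identity element at most once.

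For the upper bound, given a DDC $D\subseteq S_r(e)$ I would bin the $|D|^2-|D|$ ordered pairs of distinct elements by the depth $\ell$ of their meet $p=\mathrm{med}(e,g,h)$ (the longest common prefix of $g$ and $h$). A pair with meet at depth $\ell$ has difference $g^{-1}h$ of length exactly $2(r-\ell)$, and I would estimate $N_\ell$, the number of such pairs, in two complementary ways: (i) these differences are distinct reduced words of length $2(r-\ell)$, so $N_\ell\le 2n(2n-1)^{2(r-\ell)-1}$; and (ii) letting $E_p$ be the left‑translated sub‑configuration of elements of $D$ lying below $p$ — itself a DDC inside $S_{r-\ell}(e)$ — and using that each element has a unique length‑$\ell$ prefix, $N_\ell\le\sum_{|p|=\ell}|E_p|^2\le h(r-\ell)\sum_{|p|=\ell}|E_p|=h(r-\ell)\,|D|$. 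Summing over $\ell$ — treating $\ell=0$ via $N_0\le\bigl(1-\tfrac1{2n}\bigr)|D|^2$, and using bound (i) for deep meets / short differences and bound (ii) for shallow meets / long differences with an inductive hypothesis $h(v)\le(2n-1)^{2v/3+O(\log v)}$ — the two estimates cross over at meet‑depth $\approx r/2$ and force $|D|^2\le(2n-1)^{4r/3+O(\log r)}$. The hard part will be closing the induction with the clean exponent $2/3$: done naively the argument accumulates a constant factor $>1$ at each level (from the $\ell=0$ term and from the geometric sums over the recursive bounds), which would corrupt the exponent; extracting $2/3$ should need a sharper treatment — re‑centring adaptively when $D$ is concentrated in one branch, and organising the recursion so that $h(r)$ is governed by $h(v)$ only for $v$ in a short range about $r/2$ — and I expect the bulk of the work to be here.

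For the lower bound I would construct explicitly a DDC inside $S_r(e)$ of size $(2n-1)^{2r/3-O(\log r)}$, verifying that all elements have length $r$ (so the diameter is $2r$, adjusted by parity to $\le d$) and that $|aD\cap D|\le 1$ for $a\ne e$. A purely probabilistic subset of $S_r(e)$ followed by deletion of bad configurations only reaches exponent $r/2$, because the three‑point obstructions $\{g,h,hg^{-1}h\}$ (images of $3$‑term progressions) are too dense to remove, so the construction must be structured. The upper‑bound analysis indicates the target shape: an extremal $D$ should realise every short difference (length up to about $r$) essentially once — i.e.\ $D^{-1}D\supseteq B_r(e)$ with multiplicity one — while longer differences are sparse and stay distinct automatically. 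I would therefore look for $D=\{\,u\,\rho(u):u\in U\,\}$ with $U$ a structured set of length‑$(2r/3)$ prefixes and $\rho$ a designed length‑$(r/3)$ ``check'' suffix; the DDC condition then unwinds, through the rigidity of how reduced words split at their meet points, into the requirement that for no two cosets $c_1 s,c_2 s$ do the checks $\rho(c_1 s)$ and $\rho(c_2 s)$ agree for two different suffixes $s$ — a covering/separation condition that a random $\rho$ fails but a suitably algebraic one (exploiting free‑group cancellation, perhaps together with a Sidon‑type ingredient on the check alphabet) can be arranged to satisfy. Designing $\rho$ and $U$ is the crux of the lower bound, and matching the construction to the upper bound only up to $(2n-1)^{O(\log r)}$ is exactly what the $O(\log d)$ in the statement allows.

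Combining the two estimates then gives $m(n,d)=(2n-1)^{d/3+O(\log d)}$, with the $O(\log d)$ absorbing the sphere‑to‑ball factor $r+1$, the polynomial slack in the induction, and the gap between the construction and the upper bound.
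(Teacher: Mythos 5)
Both halves of your proposal contain genuine gaps, and in each case the paper's argument sidesteps exactly the difficulty you flag as ``the crux.'' For the upper bound, your meet-depth recursion stalls where you say it does: the shallow-meet terms (small $\ell$, dominated by $\ell=0,1$) contribute roughly $h(r-\ell)\,|D|$, and since $h(r-1)$ is within a constant factor of $h(r)$ the inequality $|D|^2\lesssim |D|\cdot h(r)+\cdots$ is vacuous when $D$ is extremal; the constant you lose per level (one checks $(2n-1)^{2/3}>2n+1$ fails for all $n\geq 2$) really does destroy the exponent, and the adaptive re-centring you gesture at is not carried out. The paper needs no induction at all. Working inside a single sphere $S_j(e)$, it splits each word of $D^{(j)}=D\cap S_j(e)$ into a prefix $x$ of length $\lfloor j/3\rfloor$ and a suffix in $D^{(j)}_x$, and observes (Lemma~\ref{lemma: condition}) that the DDC property forces $|D^{(j)}_x\cap D^{(j)}_y|\leq 1$ for distinct prefixes $x,y$. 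Inclusion--exclusion then gives $|D^{(j)}|=\sum_x|D^{(j)}_x|\leq |\bigcup_x D^{(j)}_x|+\binom{|S_{\lfloor j/3\rfloor}(e)|}{2}$, and the cut at depth $j/3$ is chosen precisely so that both terms are $O((2n-1)^{2j/3})$: the union lives in $S_{j-\lfloor j/3\rfloor}(e)$ and the number of prefix pairs is about $((2n-1)^{j/3})^2$. Summing over $j\leq\lceil d/2\rceil$ gives $(2n-1)^{d/3+O(1)}$ in one step.

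For the lower bound, your reason for abandoning the probabilistic method is incorrect: the three-point obstructions $\{g,h,hg^{-1}h\}$ do not exist inside a sphere. The paper's Lemma~\ref{lem:3-sets} shows that \emph{every} three-element subset of $S_r(e)$ is a DDC (a coincidence of differences among words of equal length forces two of them to be equal), so the only bad events involve four distinct elements, and these are sparse enough for deletion to work. Concretely, the paper takes $V=S_{d/3-\gamma}(e)$, attaches to each $v\in V$ an independent uniformly random reduced suffix $w_v$ of length $d/6+\gamma$, and shows the expected number of quadruples $u,v,x,y$ with $(uw_u)^{-1}(vw_v)=(xw_x)^{-1}(yw_y)$ is at most $(d/3-1)\,2n(2n-1)^{d/3-4\gamma-1}$; choosing $\gamma\approx\tfrac13\log_{2n-1}(d/3)$ and deleting one point per bad event leaves a DDC of size $(2n-1)^{d/3-O(\log d)}$. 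Your proposed explicit construction with a designed check map $\rho$ is left entirely unbuilt, and you acknowledge it is the hard part; as it stands the lower bound is not proved. (Your reduction to spheres and the bookkeeping absorbing polynomial factors into $(2n-1)^{O(\log d)}$ are fine and match the paper's framework.)
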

In fact, we will show an upper bound of the form $m(n,d)\leq (2n-1)^{d/3+O(1)}$. We comment that our lower bound is probabilistic.

Roughly speaking the theorem says that, in the free group, $m(n,d)\approx (2n-1)^{d/3}$ when $n$ is fixed. This result is rather surprising: The ball $B_d(e)$ of radius $d$ about the identity in the free group has cardinality
\[
1+\sum_{i=1}^{d}2n(2n-1)^{i-1} =\Theta((2n-1)^d)
\]
and so, extrapolating from the results on the free abelian case above, you might guess that the right answer should be $m(n,d)\approx \sqrt{|B_d(e)|}\approx (2n-1)^{d/2}$. (You could prove an upper bound of this form by using the inequality $m(m-1)\leq |B_d(e)|$.) Theorem~\ref{thm:free} shows that in fact the correct order of magnitude of $m(n,d)$ is much smaller than this, so there are more important bounds on the cardinality of a DDC of diameter $d$ in the free group than the bound $m(m-1)\leq |B_d(e)|$.

How big is a subset of diameter $d$ in the free group, irrespective of it having distinct differences? For any element $h$ in a group $G$, it is easy to see that the ball $B_r(h)$ of radius $r$ and centre $h$ in $G$ is a subset of $G$ of diameter at most $2r$. When $G$ is the free group $F_X$ with free generating set $X$ and when $d$ is even, it is not hard to see that every subset $D$ of  $F_X$ of diameter $d$ is contained in a ball of radius $d/2$ (to see this, choose the centre $h$ of the ball to be the unique element of $F_X$ that is equidistant from a pair of elements of $D$ at distance $d$). So a subset of $F_X$ of diameter $d$ has cardinality at most
\[
|B_{d/2}(h)|=1+\sum_{i=1}^{d/2}2n(2n-1)^{i-1}=\Theta((2n-1)^{d/2})
\]
when $d$ is even; this bound is tight since $B_{d/2}(h)$ has diameter $d$. When $d$ is odd, we may similarly argue that a subset of $F_X$ of diameter $d$ has cardinality at most $|B_{(d+1)/2}(h)|=\Theta((2n-1)^{d/2})$. (This bound is no longer tight, as $B_{(d+1)/2}(h)$ does not have diameter $d$. A tight bound is not hard to prove; the analogue of the ball of radius $d/2$ in this situation is the union of two balls of radius $\lfloor d/2\rfloor$ whose centres are at distance $1$.) We conclude that the largest subsets of diameter $d$ in $F_X$ are of size approximately $(2n-1)^{d/2}$, which (by Theorem~\ref{thm:free}) is much larger than the size $m(n,d)$ of a DDC of diameter $d$ of maximal cardinality. 

We have now seen two elementary arguments showing that $m(n,d)$ grows no faster than $(2n-1)^{d/2}$ (approximately), namely using the bound $m(m-1)\leq |B_d(e)|$ or using the above bound on the cardinality of diameter $d$ subsets. There is also a lower bound on $m(n,d)$ (again weaker than the bound implied by Theorem~\ref{thm:free}) that is a corollary of the the following explicit construction of a DDC $D$. For simplicity, assume that $d$ is divisible by $4$. Let $R$ be the set of reduced words of length $d/4$. For a reduced word $w$, let $\rev(w)$ be the reverse of $w$ (so $\rev(w)$ is a reduced word, with symbols listed in the opposite order to $w$). For any $w\in R$, the concatenation $w\,\rev(w)$ is a reduced word of length $d/2$. Define $D=\{w\,\rev(w):w\in R\}$. Now $D\subseteq B_{d/2}(e)$, where $e$ is the identity element, and so $D$ has diameter at most $d$. Furthermore, the following argument shows that all differences in $D$ are distinct. Suppose that $y,y'\in D$ with $y\neq y'$. Then $y=w\,\rev(w)$ and $y'=w'\,\rev(w')$ for distinct $w,w'\in R$. The difference $(y')^{-1}y$ is of the form $\rev(y')^{-1}(y')^{-1}y\,\rev(y)$. Since $y,y'$ are distinct and reduced, the expression $(y')^{-1}y$ does not reduce to the identity, and so the reduced form of $(y')^{-1}y$ begins with $d/4$ symbols representing $\rev(y')^{-1}$ and ends with $d/4$ symbols representing $\rev(y)$. So $\rev(y)$ and $\rev(y')$, and hence $y$ and $y'$, are determined by the reduced form of $(y')^{-1}y$ . Thus the differences are all distinct, and $D$ is a DDC. Since $|D|=|R|=2n(2n-1)^{d/4-1}$, we have constructed a DDC of diameter $d$ and cardinality approximately equal to the cardinality of the ball $B_{d}(e)$ raised to the power $1/4$.

The remainder of the paper is structured as follows. In Section~\ref{sec:upper_bound} we prove the upper bound of Theorem~\ref{thm:free}. In Section~\ref{sec:lower_bound} we provide a probabilistic argument to establish the lower bound in Theorem~\ref{thm:free} (finishing the proof of the theorem).
Finally, in Section~\ref{sec:conclusion} we provide some commentary, and list some open problems.

\section{An upper bound}
\label{sec:upper_bound}
In this section we provide an upper bound on the maximum cardinality $m(n,d)$ of a DDC with diameter $d$ contained in the free group of rank~$n$. This gives the upper bound in Theorem \ref{thm:free}. We require the following definitions. Let  $F_X$ be a free group, finitely generated by a set $X$ of \linebreak cardinality $n$. Let $D$ be a DDC in  $F_X$ with diameter $d$. Recall that for even $d$, the set $D$ is contained in a ball of radius $d/2$ with centre an element $g \in  F_X$, denoted $B_{d/2}(g)$. Indeed, we may assume (for even $d$) that $D\subseteq B_{d/2}(e)$ because we may replace $D$ by $g^{-1}D$ without altering the distinct difference property or diameter. Similarly, for odd $d$, we may assume that $D\subseteq B_{(d+1)/2}(e)$.

Define the \textit{sphere of radius $r$ about e}, denoted $S_{r}(e)$, by $S_{r}(e) = B_{r}(e)\setminus B_{r-1}(e)$. Equivalently, $S_{r}(e)$ denotes all elements at distance exactly $r$ from $e$, the `outer' elements of the ball. For every $x \in B_{r}(e)$, define $D_{x} = \{x'\colon xx' \in D, xx' \text{ is reduced}\}$.  So $D_{x}$ consists of the strings following $x$ in the reduced words representing elements of $D$.

\begin{lemma}
\label{lemma: condition} 
Let $D \subseteq S_{r}(e)$ where $r$ is a positive integer.  If $D$ is a DDC, then
\[
\lvert D_{x} \cap D_{y}\rvert \leq 1 \text{ for all distinct }x,y \in B_{r-1}(e).
\]
\end{lemma}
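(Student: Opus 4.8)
The plan is to argue by contradiction: assume $\lvert D_x\cap D_y\rvert\ge 2$ for some distinct $x,y\in B_{r-1}(e)$, and manufacture two distinct ordered pairs of distinct elements of $D$ having the same difference, which contradicts the hypothesis that $D$ is a DDC.

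First I would fix two \emph{distinct} elements $z_1,z_2\in D_x\cap D_y$. By the definition of $D_x$ and $D_y$, the four products $xz_1$, $xz_2$, $yz_1$, $yz_2$ are (reduced) words lying in $D$. Next I would compute the two relevant differences: since $x^{-1}x=e$ in $F_X$, we have $(xz_1)^{-1}(xz_2)=z_1^{-1}z_2$, and likewise $(yz_1)^{-1}(yz_2)=z_1^{-1}z_2$. Note this is an identity of group elements and does not rely on the products being reduced; reducedness only enters through the definition of $D_x$, $D_y$. So the ordered pair $(xz_1,xz_2)$ and the ordered pair $(yz_1,yz_2)$ yield the same difference.

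The only point needing a little care is checking that these two pairs genuinely witness a failure of the distinct difference property. Within each pair the two entries are distinct because $z_1\neq z_2$; and the two pairs are distinct from one another because $x\neq y$ forces $xz_1\neq yz_1$. Hence $D$ has two distinct ordered pairs of distinct elements with equal difference, contradicting the DDC property, and therefore $\lvert D_x\cap D_y\rvert\le 1$. I do not expect a real obstacle here: all the content lies in unwinding the definition of $D_x$ and observing that left translation cancels when one forms a difference. (One can also remark that the hypothesis $D\subseteq S_r(e)$ is not actually used in this argument — though when $x,y\in B_{r-1}(e)$ it does force $\lvert z_1\rvert=\lvert z_2\rvert=r-\lvert x\rvert=r-\lvert y\rvert\ge 1$, so in particular $\lvert x\rvert=\lvert y\rvert$.)
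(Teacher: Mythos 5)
Your proof is correct and follows essentially the same argument as the paper: take two distinct elements $z_1,z_2\in D_x\cap D_y$, observe that the four products lie in $D$ and that $(xz_1)^{-1}(xz_2)=z_1^{-1}z_2=(yz_1)^{-1}(yz_2)$, contradicting the DDC property. Your extra care in checking that the two ordered pairs are genuinely distinct (via $xz_1\neq yz_1$ from $x\neq y$) and your remark that the hypothesis $D\subseteq S_r(e)$ is not needed are both accurate refinements of the paper's argument.
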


\begin{proof}
Assume that for some $x,y \in B_{r-1}(e)$, where $x \neq y$, we have $\lvert D_{x} \cap D_{y}\rvert \geq 2$. Then there must exist $z,w \in D_{x}$ and $z,w \in D_{y}$, where $z \neq w$. Therefore, there must be elements $a=xz, b=xw, u=yz, v=yw$ that are reduced, distinct elements of $D$. Then, $a^{-1}b=z^{-1}x^{-1}xw=z^{-1}y^{-1}yw=u^{-1}v$, and so $D$ does not have pairwise distinct differences. Therefore, if $D$ is a DDC, then $\lvert D_{x} \cap D_{y}\rvert \leq 1$ for all $x,y \in B_{r-1}(g)$ where $x \neq y$.
\end{proof}

Lemma \ref{lemma: condition} provides a necessary condition for a set $D$ contained in a sphere to be a DDC. This condition is in fact both necessary and sufficient: see \cite[Theorem 6.0.5]{Stewart}. We now prove the main theorem of this section.

\begin{theorem} \label{thm: upper bound}
Let  $F_X$ be a free group, freely generated by a set $X$ of cardinality $n$, where $n\geq 2$. Let $m(n,d)$ be the maximum cardinality of a DDC of diameter $d$ in  $F_X$. As $d\rightarrow\infty$ with $n$ fixed, there exists a positive constant $c$  {\rm (}independent of $d$, but depending on $n${\rm )} such that
\[
m(n,d)\leq c (2n-1)^{d/3}
\]
for all sufficiently large $d$.
\end{theorem}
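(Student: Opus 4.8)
The plan is to reduce to the case of a DDC $D$ contained entirely in a sphere $S_r(e)$, and then exploit Lemma~\ref{lemma: condition} via a counting (double-counting) argument at the middle level $r/3$. First I would handle the reduction: given a DDC of diameter $d$, we may assume (as in the discussion preceding the lemma) that $D \subseteq B_{\lceil d/2\rceil}(e)$; partitioning $B_{\lceil d/2\rceil}(e)$ into spheres $S_0(e), S_1(e), \dots$ shows that some single sphere $S_r(e)$ with $r \le \lceil d/2\rceil$ contains at least a $1/(d/2+1)$ fraction of $D$, and this restriction is still a DDC. Since losing a polynomial factor in $d$ is harmless against the target $(2n-1)^{d/3}$ (indeed we are even allowed an $O(\log d)$ term in the exponent in Theorem~\ref{thm:free}, and here we want the cleaner $c(2n-1)^{d/3}$ — so a more careful bookkeeping, or absorbing the polynomial into the constant after taking $d$ large, is needed), it suffices to bound $|D|$ for $D \subseteq S_r(e)$ with $r \le d/2 + O(1)$.

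Next, for $D \subseteq S_r(e)$, fix an integer $t$ (to be optimized near $t = r/3$) and consider the prefixes of length $t$ of the reduced words representing elements of $D$. For $x \in S_t(e) \subseteq B_{r-1}(e)$, the set $D_x$ consists of reduced strings of length exactly $r-t$, and each element of $D$ with prefix $x$ corresponds to a unique element of $D_x$; so $|D| = \sum_{x \in S_t(e)} |D_x|$. Lemma~\ref{lemma: condition} says the sets $(D_x)_{x \in S_t(e)}$ are "almost disjoint": $|D_x \cap D_y| \le 1$ for distinct $x,y$. The standard way to turn an almost-disjoint family into a bound is an inclusion–exclusion / Cauchy–Schwarz estimate: each $D_x$ is a subset of the set $W$ of reduced words of length $r-t$ that can legally follow $x$, a set of size at most $(2n-1)^{r-t}$; counting pairs $(\{x,y\}, w)$ with $w \in D_x \cap D_y$ gives $\sum_{w \in W} \binom{c_w}{2} \le \binom{|S_t(e)|}{2}$ where $c_w = \#\{x : w \in D_x\}$, and combining this with $\sum_w c_w = |D|$ via convexity yields $|D|^2 \lesssim |W|\cdot|S_t(e)|^2 + |D|\cdot|W|$, hence roughly $|D| \lesssim |S_t(e)|\cdot|W|^{1/2} + |W| = (2n-1)^{t}(2n-1)^{(r-t)/2} + (2n-1)^{r-t}$ up to constants.

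The exponent $t + (r-t)/2 = (r+t)/2$ is increasing in $t$, while $r - t$ is decreasing in $t$; the two are balanced when $(r+t)/2 = r-t$, i.e. $t = r/3$, giving both exponents equal to $r/3 \le (d/2+O(1))/3$. Wait — that gives $r/3 \approx d/6$, not $d/3$, so a naive split at a single level is too weak; the genuine argument must be subtler. The fix is that $D_x$ itself lives in a sphere and we can recurse, or better: instead of bounding $|D_x| \le |W| = (2n-1)^{r-t}$ crudely, observe that $D_x$ is again a DDC-like object (a set of reduced words all of the same length $r-t$ with the almost-disjointness inherited), so one applies the same sphere bound to each $D_x$ at a second cut point, i.e. one splits the reduced word of length $r$ into three blocks of length roughly $r/3$ each and double-counts over the middle block. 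Concretely: write each $g \in D$ as $g = x\,m\,z$ with $|x| = |z| = t$ and $|m| = r - 2t$; for fixed middle word $m$, the pairs $(x,z)$ form a DDC in a suitable sense, and Lemma~\ref{lemma: condition} (applied with the roles of prefix/suffix swapped via reversal) forces the number of such pairs with a given $m$ to be controlled, giving $|D| \le (\text{number of middle words}) \times (\text{pairs per middle word})$ — optimized this is $(2n-1)^{t}$ from the outer blocks against $(2n-1)^{r-2t}$ for the middle, balanced at $t \approx r/3 \le d/3 + O(1)$. The main obstacle is precisely making this three-block argument rigorous: extracting from Lemma~\ref{lemma: condition} the correct statement that, for a fixed middle segment, the attached prefix–suffix pairs cannot be too numerous (this will again be an almost-disjointness statement, proved by the same difference-collision computation as in Lemma~\ref{lemma: condition}), and then choosing $t$ to balance against $r \le \lceil d/2\rceil$ so that the final exponent is $d/3 + O(1)$ rather than $d/6$ or $d/4$. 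Once that combinatorial core is in place, summing over the $O(d)$ possible values of $r$ and absorbing the polynomial loss into the constant $c$ for $d$ large completes the proof.
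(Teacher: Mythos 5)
Your first approach is essentially the paper's proof, and it works -- you abandoned it because of an arithmetic slip. At the cut $t=r/3$ the two exponents in your bound $|D|\lesssim (2n-1)^{t+(r-t)/2}+(2n-1)^{r-t}$ are both $2r/3$, not $r/3$: indeed $t+(r-t)/2=(r+t)/2=2r/3$ when $t=r/3$. Since every sphere met by $D$ has radius $r\le\lceil d/2\rceil$, this gives $|D\cap S_r(e)|=O\bigl((2n-1)^{2r/3}\bigr)=O\bigl((2n-1)^{d/3}\bigr)$, which is exactly the target; the one-level cut at one third of the radius is not ``too weak''. (The paper obtains the same per-sphere bound even more simply, without Cauchy--Schwarz: by inclusion--exclusion, $|D\cap S_r(e)|\le\bigl|S_{r-\lfloor r/3\rfloor}(e)\bigr|+\binom{|S_{\lfloor r/3\rfloor}(e)|}{2}$, and both terms are $O\bigl((2n-1)^{2r/3}\bigr)$ by Lemma~\ref{lemma: condition}.) The ``three-block'' scheme you pivot to instead is the genuine gap in what you wrote: you never formulate, let alone prove, the key claim that prefix--suffix pairs attached to a fixed middle word are few, and it is not evident that sharing a middle block produces the difference collisions that Lemma~\ref{lemma: condition} exploits (there the collisions come from shared prefixes). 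As written, that part is a sketch resting on an unproved lemma, and it was motivated by the miscalculation rather than by any defect of the one-cut argument.

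A second, smaller but real problem is the reduction to a single sphere. Pigeonholing gives $|D|\le(d/2+1)\max_r|D\cap S_r(e)|$, and a factor polynomial in $d$ cannot be ``absorbed into the constant'': the statement requires $c$ independent of $d$, and your route would only yield $|D|\le c\,d\,(2n-1)^{d/3}$ (sufficient for Theorem~\ref{thm:free}, but not for the present theorem as stated). The clean fix, which is what the paper does, is to bound each layer separately, $|D\cap S_j(e)|=O\bigl((2n-1)^{2j/3}\bigr)$, and sum over $j\le\lceil d/2\rceil$: the resulting geometric series is dominated by a constant times its top term, so no polynomial loss appears at all.
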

 In fact, we show that
\[
|D| \leq \frac{2n(4n^{2}-3n+1) }{(2n-1)^{1/3}((2n-1)^{2/3}-1)} \cdot (2n-1)^{d/3}.
\]
\begin{proof}
Let $D \subseteq F_X$ be a DDC of diameter $d$. Without loss of generality, replacing $D$ by $g^{-1}D$ if necessary for some $g \in  F_X$, we may assume that $D \subseteq B_{\lceil d/2 \rceil}(e)$. Let $j$ be a non-negative integer and let $D^{(j)}$ denote the set $D \cap S_{j}(e)$. Then $D = \bigcup_{j=0}^{\lceil d/2\rceil} D^{(j)}$. Note that $|D^{(0)}| \leq 1$, and therefore
\begin{equation} \label{eqn: upper bound 1}
|D| \leq 1 + \sum_{j=1}^{\lceil d/2 \rceil} |D^{(j)}|.
\end{equation}
We now prove an upper bound on the cardinality of the sets $D^{(j)}$. If $k_{j}$ is a non-negative integer strictly less than $j$, then it is not difficult to see that $D^{(j)}= \bigcup_{x \in S_{k_{j}}(e)}x\cdot D_{x}^{(j)}$, where $D_{x}^{(j)}$ is defined as above Lemma \ref{lemma: condition}. We choose $k_{j}$ to be $\lfloor j/3 \rfloor$. The sets $x\cdot D_{x}^{(j)}$ are distinct for each $x \in S_{\lfloor j/3 \rfloor}(e)$, so we have
\begin{equation} \label{eqn: upper bound 2}
|D^{(j)}| = \sum_{x \in S_{\lfloor j/3 \rfloor}(e)} |D_{x}^{(j)}|. \end{equation}
By the inclusion-exclusion principle, we have
\begin{equation} \label{eqn: upper bound 3}
\left| \bigcup_{x \in S_{\lfloor j/3 \rfloor}(e)} D_{x}^{(j)} \right| \geq \sum_{x \in S_{\lfloor j/3 \rfloor}(e)} \left| D_{x}^{(j)}\right|  - \!\!\! \sum_{\substack{x, y \in S_{\lfloor j/3 \rfloor}(e), \\ x \neq y}} \left| D_{x}^{(j)} \cap D_{y}^{(j)}\right| .
\end{equation}
Combining inequality (\ref{eqn: upper bound 1}), equation (\ref{eqn: upper bound 2}) and inequality (\ref{eqn: upper bound 3}), we have
\begin{equation} \label{eqn: upper bound 4}
\left| D \right| \leq 1 + \sum_{j=1}^{\lceil d/2 \rceil} \left(  \left| \bigcup_{x \in S_{\lfloor j/3 \rfloor}(e)} D_{x}^{(j)} \right| + \!\!\! \sum_{\substack{x, y \in S_{\lfloor j/3 \rfloor}(e), \\ x \neq y}} \left| D_{x}^{(j)} \cap D_{y}^{(j)}\right| \right).
\end{equation}

For all $x \in S_{\lfloor j/3 \rfloor}(e)$, every element in $D_{x}^{(j)}$ has length $j-\lfloor j/3 \rfloor$. Therefore $ \bigcup_{x \in S_{\lfloor j/3 \rfloor}(e)} D_{x}^{(j)} \subseteq  S_{j-\lfloor j/3 \rfloor}(e)$. As $j-\lfloor j/3 \rfloor \leq 2j/3 + 1$, we have $\lvert \bigcup_{x \in S_{\lfloor j/3 \rfloor}(e)} D_{x}^{(j)} \rvert \leq  2n(2n-1)^{2j/3}$.

Each set $D^{(j)}$ is a DDC where every element has equal length $j$. \linebreak Lemma \ref{lemma: condition} tells us that for every $j$, we have $|D_{x}^{(j)} \cap D_{y}^{(j)}| \leq 1$ for all distinct $x,y \in S_{\lfloor j/3 \rfloor}(e)$. Using the fact that
\[
{| S_{k_{j}}(e) | \choose 2} < \tfrac{1}{2} \left(2n(2n-1)^{2\lfloor j/3 \rfloor-2}\right)^{2},
\]
and $2\lfloor j/3 \rfloor \leq 2j/3$, inequality (\ref{eqn: upper bound 4}) now becomes
\begin{equation} \label{eqn: upper bound 5}
|D| \leq 1 + \sum_{j=1}^{\lceil d/2 \rceil} \left( 2n(2n-1)^{2j/3} + 2n^{2}(2n-1)^{2j/3-2} \right).
\end{equation}

We can rearrange inequality \eqref{eqn: upper bound 5} to get

\begin{equation} \label{eqn: upper bound 6}
|D| \leq 1 + \frac{2n(4n^{2}-3n+1)}{(2n-1)^{2}} \sum_{j=1}^{\lceil d/2 \rceil} (2n-1)^{2j/3}. 
\end{equation}

The summation in (\ref{eqn: upper bound 6}) forms a geometric sequence, and as $\lceil d/2 \rceil \leq d/2 + 1$ we find
\begin{equation} 
|D| \leq  1 + \frac{2n(4n^{2}-3n+1)}{(2n-1)^{2}} \cdot \frac{(2n-1)^{2/3}((2n-1)^{d/3 + 1}-1)}{(2n-1)^{2/3}-1}. \label{eqn: upper bound 7}
\end{equation}

We rewrite inequality (\ref{eqn: upper bound 7}) to see

\begin{equation*} 
|D| \leq 1 + \frac{2n(4n^{2}-3n+1)}{(2n-1)^{4/3}((2n-1)^{2/3}-1)} + \frac{2n(4n^{2}-3n+1)(2n-1)^{d/3} }{(2n-1)^{1/3}((2n-1)^{2/3}-1)} 
\end{equation*}
Since 
\[
\frac{2n(4n^{2}-3n+1)}{(2n-1)^{4/3}((2n-1)^{2/3}-1)} \geq 1,
\]
we get the desired bound of
\begin{equation*} 
|D| \leq \frac{2n(4n^{2}-3n+1) }{(2n-1)^{1/3}((2n-1)^{2/3}-1)} \cdot (2n-1)^{d/3}. \qedhere
\end{equation*}
\end{proof}

Theorem \ref{thm: upper bound} proves the upper bound in Theorem \ref{thm:free}.

\section{A lower bound}
\label{sec:lower_bound}
In this section we prove a probabilistic result which forms the lower bound on $m(n,d)$, the maximum cardinality of a DDC of diameter $d$ in the free group of rank $n$.

Let  $F_X$ be a free group, freely generated by a set $X$ of cardinality $n$, where $n\geq 2$. The sphere $S_r(e)$ of radius $r$ centred at the identity $e$ is a subset of  $F_X$ with diameter $2r$. We show the existence of a large DDC in this sphere. Firstly, we prove that any three-element subset of the free group whose elements have equal length must have pairwise distinct differences.

\begin{lemma}
\label{lem:3-sets}
Let $F_X$ be a free group, freely generated by a set $X$ of cardinality~$n$, where $n\geq 2$. Let $r$ be a positive integer. If $D$ is a subset of $S_r(e)$ with cardinality $3$ then $D$ is a DDC.
\end{lemma}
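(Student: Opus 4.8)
The plan is to use the necessary condition in Lemma~\ref{lemma: condition}: a subset $D\subseteq S_r(e)$ is a DDC provided $|D_x\cap D_y|\le 1$ for all distinct $x,y\in B_{r-1}(e)$. (The text notes this condition is in fact sufficient, citing \cite[Theorem 6.0.5]{Stewart}; alternatively one can verify directly that a coincidence of differences among three equal-length elements forces the configuration described below.) So write $D=\{a,b,c\}$ with $a,b,c$ distinct reduced words of length $r$, and suppose for contradiction that two of the (at most six) differences coincide. After relabelling and possibly replacing a difference by its inverse, the only genuinely new case is $a^{-1}b=c^{-1}a$, say (the case $a^{-1}b=a^{-1}c$ is immediate, and $a^{-1}b=(a^{-1}b)^{-1}=b^{-1}a$ would force $a^{-1}b$ to be an involution, impossible in a free group unless $a=b$). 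So I would reduce to the single equation $a^{-1}b=c^{-1}a$, i.e.\ $aa=bc^{-1}\cdot\,$... more precisely $a^{-1}b=c^{-1}a \iff bc^{-1}=(a^{-1})^{-1}(c^{-1}a)c^{-1}\cdot$; cleanest is to rewrite it as $a^2 = (ab^{-1})\,a\,$... I'll instead keep it symmetric as $a^{-1}b = c^{-1}a$.

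First I would examine the cancellation structure of this word equation. Since $a,b$ have the same length $r$, the reduced form of $a^{-1}b$ has even length $2\ell$ where $\ell$ is the length of the part of $a$ that survives; writing $a = p u$, $b = p v$ with $u,v$ beginning with different letters and $|u|=|v|=\ell$, we get $a^{-1}b = u^{-1}v$ of length $2\ell$. Similarly, since $c,a$ have the same length, $c^{-1}a = s^{-1}t$ where $c = q s$, $a = q t$, $|s|=|t|$. Equating the two reduced words $u^{-1}v = s^{-1}t$ forces $|u^{-1}| = |s^{-1}|$ (comparing where the "turn" happens, which is determined since $u,v$ disagree in the first letter and $s,t$ disagree in the first letter), hence $u^{-1}=s^{-1}$ and $v=t$, i.e.\ $u=s$ and $v=t$. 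Then $a = pu$ and $a = qt = qv$, and $c = qs = qu$, $b = pv$. From $a=pu=qv$ with $|u|=|v|$ we get $|p|=|q|$, and comparing prefixes $p=q$; but then $pu=a=pv$ gives $u=v$, contradicting that $u,v$ begin with different letters (unless $\ell=0$, but $\ell=0$ means $a=b$). Hence no coincidence of differences is possible, and $D$ is a DDC.

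The main obstacle I anticipate is purely bookkeeping: being careful about the case analysis of which pair of the six ordered differences coincides, and ruling out the degenerate "$\ell = 0$" situations (which all collapse to two of $a,b,c$ being equal) without tedium. I expect the argument above to cover the representative case $a^{-1}b=c^{-1}a$; the remaining cases — $a^{-1}b = b^{-1}c$, $a^{-1}b=c^{-1}b$, $a^{-1}c = b^{-1}c$, etc.\ — are each either immediate or reduce to this one by relabelling $a,b,c$ and/or inverting. One clean way to organise this is to observe that a repeated difference among $\{a,b,c\}$ means there are four \emph{distinct} ordered pairs from $D\times D$ with a common difference is impossible for a $2$-element multiset, so the repetition involves at most the three elements, and a short argument (exactly the one in the proof of Lemma~\ref{lemma: condition}, run with $x=e$ and the "inner" elements being prefixes of $a,b,c$) pins down the only possible pattern. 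I would present the single equation analysis in full and dispatch the other patterns in a sentence each.
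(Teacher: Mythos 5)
Your proposal is correct and follows essentially the same route as the paper: dispose of the two-element coincidence via the absence of involutions, the case $a^{-1}b=a^{-1}c$ by cancellation, and reduce the remaining patterns to $a^{-1}b=c^{-1}a$, which you settle by matching the cancellation lengths and comparing the two halves of the equal reduced words to force two of $a,b,c$ to coincide. Your prefix/suffix notation ($a=pu$, $b=pv$, etc.) is just a cosmetic variant of the paper's letter-indexed argument, so the direct verification you give (rather than invoking the unproved sufficiency of Lemma~\ref{lemma: condition}) matches the paper's proof.
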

\begin{proof}
Write $D= \{a,b,c\}$ where $a,b,c\in S_r(e)$ are distinct elements of  $F_X$ of length~$r$. Assume, for a contradiction, that $D$ is not a DDC. Then two differences in $D$ agree. If these differences involve just two elements, $a$ and $b$ say, then we must have $a^{-1}b=b^{-1}a$. But this implies that $(ab^{-1})^2=e$. Since  $F_X$ has no elements of order $2$, we see that $ab^{-1}=e$ and so $a=b$. This contradicts the fact that $a$ and $b$ are distinct. So all three elements must be involved in our two agreeing differences in $D$. Up to relabelling, either $a^{-1}b=a^{-1}c$, or $a^{-1}b=c^{-1}a$. If $a^{-1}b=a^{-1}c$, then by left-multiplying by $a$ we see $b=c$ which is again a contradiction. So we may assume that $a^{-1}b=c^{-1}a$ and $a,b,c$ are distinct.

If $a^{-1}b=c^{-1}a$, then the reduced words $a^{-1}b$ and $c^{-1}a$ must have the same length. If there is no reduction, then by comparing letters we see $a=c$ which is a contradiction. So there must exist a positive integer $k \leq r$ such that the reduced words $a^{-1}b$ and $c^{-1}a$ both have length $2r-2k$. Write $a_i$, $b_i$ and $c_i$ for the $i$th letter in the words $a$, $b$ and $c$ respectively. Then, we have $a_{1}\cdots a_{k}=b_{1}\cdots b_{k}$. By comparing letters on the right hand side of the reduced words corresponding to $a^{-1}b$ and $c^{-1}a$, we also get $b_{k+1}\cdots b_{r}=a_{k+1}\cdots a_{r}$. But then $a=b$ which is a contradiction.
\end{proof}

Lemma \ref{lem:3-sets} tells us that when considering subsets of the free group where all elements have the same length, if the subset is not a distinct difference configuration then the repeated difference must arise from the pairwise differences of four distinct elements. We now prove the lower bound.

\begin{theorem}
\label{thm: lower bound}
Let  $F_X$ be a free group, freely generated by a set $X$ of cardinality~$n$, where $n\geq 2$. Let $m(n,d)$ be the maximum cardinality of a DDC of diameter at most $d$ in  $F_X$. As $d\rightarrow\infty$ with $n$ fixed, 
\[
m(n,d) \geq (2n-1)^{d/3+ O(\log (d))}.
\]
\end{theorem}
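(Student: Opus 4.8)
The plan is to prove the lower bound probabilistically by selecting elements of the sphere $S_r(e)$ uniformly at random and deleting elements to destroy all repeated differences. Fix $r=\lfloor d/2\rfloor$, so that $S_r(e)$ has diameter $2r\le d$ and cardinality $N:=2n(2n-1)^{r-1}=\Theta((2n-1)^{d/2})$. Choose a random subset $D_0\subseteq S_r(e)$ by including each element independently with probability $p$, for a value of $p$ to be optimised (I expect $p$ of the shape $(2n-1)^{-d/6+O(\log d)}$, so that $pN \approx (2n-1)^{d/3}$). By Lemma~\ref{lem:3-sets}, a subset of $S_r(e)$ all of whose elements have length $r$ fails to be a DDC only because of a \emph{bad quadruple}: four distinct elements $a,b,u,v$ with $a^{-1}b=u^{-1}v$. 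So it suffices to bound the expected number of bad quadruples contained in $D_0$, then remove one element from each to obtain a genuine DDC; since the resulting set still contains the differences only on the remaining elements, it will be a DDC of diameter at most $d$.

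The key step is to count, for each nonidentity $g\in F_X$ of length $\le 2r$, the number of ordered pairs $(a,b)$ with $a,b\in S_r(e)$ and $a^{-1}b=g$; call this $P(g)$. A bad quadruple corresponds to a choice of $g\ne e$ together with two distinct ordered pairs $(a,b),(u,v)$ realising $g$ and using four distinct vertices, so the total number of bad quadruples is at most $\sum_{g\ne e} P(g)^2$. The expected number of bad quadruples inside $D_0$ is then at most $p^4\sum_{g\ne e}P(g)^2$. The combinatorial heart is therefore the estimate $\sum_{g\ne e}P(g)^2 = O\!\big(\operatorname{poly}(d)\cdot (2n-1)^{3r}\big)$, i.e.\ roughly $(2n-1)^{3r}$ up to polynomial factors. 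To see why this is plausible: if $a^{-1}b=g$ has reduced length $2r-2k$ (the common cancellation length $k$ as in the proof of Lemma~\ref{lem:3-sets}), then $a$ is any length-$r$ word, $b$ is determined by $a$ and $g$ up to the choice of the cancelled prefix, so $P(g)$ is controlled by how many $a$ of length $r$ have a given suffix of length $k$; summing $P(g)^2$ over all $g$ with a fixed $k$ contributes about $(2n-1)^{2r+k}\le (2n-1)^{3r}$, and there are only $O(r)=O(d)$ values of $k$. I would carry out this casework on $k$ carefully, which is the place where constants and the $(2n-1)$-powers must be tracked honestly.

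Granting the estimate $\sum_{g\ne e}P(g)^2 = (2n-1)^{3r+O(\log d)}$, the argument concludes as follows. Set $p=(2n-1)^{-r/2+O(\log d)}$ chosen so that $pN$ and $\tfrac12 p^4\sum_g P(g)^2$ are both of order $(2n-1)^{r/3+O(\log d)}$; more precisely, balance $\mathbb{E}|D_0|=pN$ against (twice) the expected number of bad quadruples $B:=p^4\sum_{g\ne e}P(g)^2$ by taking $p\asymp N^{1/3}\big(\sum_g P(g)^2\big)^{-1/3}$, which since $N\asymp (2n-1)^{r}$ and $\sum_g P(g)^2 \asymp (2n-1)^{3r}$ up to $d^{O(1)}$ factors gives $p = (2n-1)^{-2r/3+O(\log d)}$ and hence $\mathbb{E}|D_0| = (2n-1)^{r/3+O(\log d)}$ and $\mathbb{E}[B]\le \mathbb{E}|D_0|/2$. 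Then in expectation $|D_0|$ minus the number of bad quadruples is at least $\tfrac12\mathbb{E}|D_0| = (2n-1)^{r/3+O(\log d)}$, so there is a choice of $D_0$ for which deleting one element per bad quadruple yields a DDC $D\subseteq S_r(e)$ of cardinality $(2n-1)^{r/3+O(\log d)}$. Since $r=\lfloor d/2\rfloor$ gives $r/3 = d/6 + O(1)$, this would only yield $(2n-1)^{d/6+O(\log d)}$ — so to reach the claimed exponent $d/3$ one must instead work in the sphere $S_r(e)$ with $r = \lceil 2d/3\rceil$ wait, that has diameter $2r > d$; the correct fix is that the diameter of $S_r(e)$ is $2r$, and we need $2r \le d$, but we should not use the crude bound that a bad configuration needs four distinct vertices summed against all of $S_r$ — rather, reexamining, the deletion step costs only a polynomial-in-$d$ factor off of $pN$, not a cube root, because the number of bad quadruples through a \emph{fixed} vertex is small; I would instead bound, for each $a\in S_r(e)$, the number of triples $(b,u,v)$ completing it to a bad quadruple by $O(d)\cdot (2n-1)^{r}$, delete vertices greedily, and lose only the factor needed so that $p=(2n-1)^{-r+O(\log d)}$ suffices — hmm, that gives $|D|\approx pN = (2n-1)^{O(\log d)}$, too small. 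The main obstacle, and the step I expect to require the most care, is precisely this optimisation: getting the exponent to come out to $d/3$ rather than $d/6$ forces the random sample to live in a sphere of radius $r$ with $2r\le d$ while simultaneously showing the number of bad quadruples grows like $(pN)^?$ slowly enough; the honest route is to fix $r=\lfloor d/2\rfloor$, prove $\sum_{g\ne e}P(g)^2 = (2n-1)^{2r+O(\log d)}$ (not $(2n-1)^{3r}$ — the correct count, since for the dominant $k=0$ term $P(g)\le 1$ for most $g$ and $\sum_g P(g) = N(N-1)$), take $p = (2n-1)^{-r/3+O(\log d)}$ so that $p^4\sum P(g)^2 \le \tfrac12 pN$ and $pN = (2n-1)^{2r/3+O(\log d)} = (2n-1)^{d/3+O(\log d)}$, and then delete one vertex per bad quadruple. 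So the real work is the sharp evaluation $\sum_{g\ne e} P(g)^2 = (2n-1)^{2r+O(\log d)}$ via the cancellation-length casework, and I would devote the bulk of the proof to that estimate, after which the probabilistic deletion and the choice of $p$ are routine.
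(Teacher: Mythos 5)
Your final, self-corrected plan is sound and does prove the theorem, but it reaches the bound by a genuinely different construction from the paper's. You take a Bernoulli random subset of the whole sphere $S_r(e)$ with $r=\lfloor d/2\rfloor$, keeping each element with probability $p\approx (2n-1)^{-r/3}d^{-O(1)}$, and then delete one vertex per bad quadruple; the paper instead builds its random set with deterministic prefixes and random suffixes: every $v$ in the sphere $S_{d/3-\gamma}(e)$ is extended by an independently chosen random reduced suffix $w_v$ of length $d/6+\gamma$ (with $\gamma\approx\tfrac13\log_{2n-1}(d/3)$), so the set size $\approx(2n-1)^{d/3-\gamma}$ is fixed in advance and only the expected number of collisions must be controlled before the same delete-one-per-bad-event step. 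Both arguments lean on the same two ingredients — Lemma~\ref{lem:3-sets} to reduce failures to quadruples of distinct elements, and a cancellation-length casework to count colliding quadruples — and your central claim $\sum_{g\neq e}P(g)^2=(2n-1)^{2r+O(\log d)}$ is correct: for $g$ of even reduced length $2m$ one has $P(g)\leq 2n(2n-1)^{r-m-1}$ (the pair $(a,b)$ is determined by $g$ together with the common prefix of length $r-m$), and summing over the at most $2n(2n-1)^{2m-1}$ such $g$ and over $m$ gives $O\bigl(d\,(2n-1)^{2r}\bigr)$; with that, your balancing $p^4\sum_g P(g)^2\leq\tfrac12 pN$ and $pN=(2n-1)^{2r/3+O(\log d)}=(2n-1)^{d/3+O(\log d)}$ goes through. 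Be aware, though, that your write-up records two abandoned false starts (the $(2n-1)^{3r}$ estimate leading to exponent $d/6$, and the greedy-deletion digression), and the decisive second-moment estimate is only sketched; in a final version you would need to excise the detours and prove that estimate in full. Comparing the two routes: the paper's prefix-plus-random-suffix model buys a deterministic set size and a very clean collision count (conditions on prefixes determine two of the four elements, and the suffix collision probability is exactly $(2n-1)^{-2(d/6+\gamma)}$), yielding an explicit bound $2n(2n-1)^{d/3-\frac43\log_{2n-1}(d/3)-5}$; your alteration method is more standard and isolates the combinatorics in the self-contained quantity $\sum_g P(g)^2$, at the cost of a random set size and of having to carry the $P(g)$ casework explicitly.
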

\begin{proof}
We will show that when $d$ is even, we have
\begin{equation} \label{eqn: lower bound 1}
m(n,d) \geq 2n(2n-1)^{\frac{d}{3} -\frac{4}{3} \log_{2n-1}(d/3)-5}.
\end{equation}
The left hand side of inequality (\ref{eqn: lower bound 1}) implies a lower bound of the form $(2n-1)^{d/3+ O(\log (d))}$. Therefore once we have established inequality (\ref{eqn: lower bound 1}) for $d$ even, the theorem is proved for when $d$ is even. If $d$ is odd, then $m(n,d) \geq m(n,d-1)$ where $d-1$ is even. Hence (\ref{eqn: lower bound 1}) shows the theorem also holds when $d$ is odd. It therefore suffices to prove inequality (\ref{eqn: lower bound 1}) where we may assume that $d$ is even.

Let $\gamma$ be such that $\frac{d}{3}-\gamma$ is a positive integer. We will choose the specific value of $\gamma$ later (which will be logarithmic in $d$). We construct a randomised set as follows.

Let $V=S_{d/3 - \gamma}(e)$, and let $W=S_{d/6 + \gamma}(e)$. For all $v \in V$, choose $w_{v} \in W$, uniformly and independently at random so that $vw_{v}$ is reduced. Note that there are $2n(2n-1)^{d/6 + \gamma}$ options for each $w_{v}$. Let $D=\{vw_{v} \mid v \in V \}$ be the resulting random subset of words in $F_{X}$.

Note that $|D|=|V|=2n(2n-1)^{d/3 - \gamma-1}$, because $vw_v\not = v'w_{v'}$ when $v\not=v'$. Also, $D \subseteq S_{d/2}(e)$ because $vw_{v}$ is always reduced. The diameter of $D$ is therefore less than or equal to $d$.

If $D$ is a set constructed as previously described, we wish to know how many duplicated differences we would expect. All elements in $D$ have equal length $\frac{d}{2}$ and so Lemma \ref{lem:3-sets} tells us that if there is a repeated difference, then \linebreak there must be four distinct elements $u,v,x,y$ in $V$ such that \linebreak $(uw_{u})^{-1}(vw_{v})=(xw_{x})^{-1}(yw_{y})$.

Let $A_{u,v,x,y}$ be the event that $(uw_{u})^{-1}(vw_{v})=(xw_{x})^{-1}(yw_{y})$, and let $I_{u,v,x,y}$ be the indicator random variable for event $A_{u,v,x,y}$. Let $\mathbb{P}_{u,v,x,y}$ be the probability that $I_{u,v,x,y}=1$.

Define $\eta (D) = \sum_{(u,v,x,y)}\mathbb{P}_{u,v,x,y}(D)$, where the sum is over ordered quadruples $(u,v,x,y)$ of distinct elements of $V$. By linearity of expectation, $\eta (D)$ is the expected number of bad events that occur. We aim to show that \begin{equation} \label{eqn: lower bound 2}
\eta (D) \leq (d/3 -1 )2n(2n-1)^{d/3 - 4\gamma -1}.
\end{equation}
This suffices to prove the theorem, by the following argument.
There exists an outcome $D^{*} \subseteq S_{d/2}(e)$ where the number of repeated differences is at most $\eta (D)$. By removing at most $\eta (D)$ elements from $D^{*}$ (one element in each bad event), we produce a subset $D'$ which is a DDC of diameter at most $d$, and has cardinality at least $| D^{*}| - \eta (D)$. Hence
$m(n,d)\geq |D^*|-\eta (D)$. The inequality~\eqref{eqn: lower bound 2} therefore implies
\[
m(n,d) \geq 2n(2n-1)^{d/3 - \gamma -1} - (d/3 -1 )2n(2n-1)^{d/3 - 4\gamma -1}.
\]
We choose $\gamma$ to be an integer such that
\[
\frac{1}{3}\log_{2n-1}(d/3) \leq \gamma \leq \frac{1}{3}\log_{2n-1}(d/3) + 1.
\]
Using the lower bound on $\gamma$, we have $d/3 \leq (2n-1)^{3 \gamma}$ and therefore $m(n,d) \geq 2n(2n-1)^{d/3 -4 \gamma-1}$. Then using the upper bound on $\gamma$ we see that 
\[
m(n,d) \geq 2n(2n-1)^{\frac{d}{3} -\frac{4}{3} \log_{2n-1}(d/3)-5},
\]
proving inequality (\ref{eqn: lower bound 1}). So the theorem follows, once we have established~\eqref{eqn: lower bound 2}. We now show that this inequality holds.

Assume that for some distinct $u,v,x,y \in V$, we have
\[
(uw_{u})^{-1}(vw_{v})=(xw_{x})^{-1}(yw_{y}).
\]
As $u,v,x,y$ are distinct, there must be some cancellation, which must be equal on both sides. Therefore for some (strictly) positive integer $k$, we have $|(uw_{u})^{-1}(vw_{v})|=|(xw_{x})^{-1}(yw_{y})|= d-2k$. As $u \neq v$ and $x \neq y$, we have  $k < \frac{d}{3}-\gamma$. In other words, for some $0 < k< \frac{d}{3}-\gamma$ the following conditions hold:
\begin{align}
u_{1}\ldots u_{k}&=v_{1}\ldots v_{k}, \label{condition: 1}\\
x_{1}\ldots x_{k}&=y_{1}\ldots y_{k}, \label{condition: 2} \\
u_{k+1}\ldots u_{d/3 - \gamma}&=x_{k+1}\ldots x_{d/3 - \gamma}, \label{condition: 3} \\
v_{k+1}\ldots v_{d/3 - \gamma}&=y_{k+1}\ldots y_{d/3 - \gamma}, \label{condition: 4} \\
u_{k+1}&\neq v_{k+1}, \label{condition: 5} \\
x_{k+1}&\neq y_{k+1}. \label{condition: 6}
\end{align}
Note that, for a given choice of $u$, $v$, $x$ and $y$, conditions (\ref{condition: 1}) to (\ref{condition: 6}) are satisfied for at most one value of $k$. If $u$, $v$, $x$ and $y$ are such that  these conditions are never satisfied, whatever the value of $k$, then $\mathbb{P}_{u,v,x,y}(D)=0$. Otherwise, $\mathbb{P}_{u,v,x,y}(D)$ is equal to the probability that $w_{u}=w_{x}$ and $w_{v}=w_{y}$ and so $\mathbb{P}_{u,v,x,y}(D)\leq (2n-1)^{-2(d/6+\gamma)}$. Hence
\begin{equation}
\label{eqn:eta}
\eta(D)\leq \sum_{k}\sum_{u,v,x,y} (2n-1)^{-2(d/6+\gamma)},
\end{equation}
where the outer sum runs over all $k$ such that $0 < k< \frac{d}{3}-\gamma$, and the inner sum runs over all distinct $u,v,x,y\in V$ that satisfy conditions (\ref{condition: 1}) to (\ref{condition: 6}). There are at most $d/3-1$ choices for the value of $k$ in~\eqref{eqn:eta}. Fix a value of~$k$. There
are $2n(2n-1)^{d/3-\gamma-1}$ choices for $u$, and then $(2n-1)^{d/3-\gamma}$ choices for $y$ (as $y_{k+1}\neq u_{k+1}$ by~\eqref{condition: 3} and~\eqref{condition: 6}).
The choices of $u$ and $y$, the value of $k$,
 and the conditions~\eqref{condition: 1} to~\eqref{condition: 4} determine $v$ and $x$. So there are at most $2n(2n-1)^{2d/3 - 2\gamma -1}$ terms in the inner sum of~\eqref{eqn:eta}. 
Hence~\eqref{eqn:eta} implies
\[
\eta (D) \leq (d/3 -1 )2n(2n-1)^{2d/3 - 2\gamma -1}(2n-1)^{-2(d/6+\gamma)},
\] which proves the inequality (\ref{eqn: lower bound 2}) as required.
\end{proof} 

Theorem \ref{thm: lower bound} proves the lower bound of Theorem \ref{thm:free}. Combined with Theorem \ref{thm: upper bound}, which forms the upper bound, this completes the proof of Theorem~\ref{thm:free}.

\section{Discussion}
\label{sec:conclusion}

\begin{enumerate}
\item It would be interesting to tighten our bounds on $m(n,d)$, the largest DDC in $F_n$ of diameter at most $d$, and to remove the probabilistic nature of our lower bound:

\begin{open}
Is there a good explicit construction for a large DDC of diameter $d$ in $F_n$?
\end{open}

\begin{open}
As $d\rightarrow\infty$ with $n$ fixed, is it the case that $m(n,d)=(2n-1)^{d/3+O(1)}$?
\end{open}

\item The problem of determining $m(n,d)$ when $d$ is fixed is also interesting. Stewart~\cite{Stewart} shows that $m(n,2)=2n$, that $m(n,3)=2n+1$, and $m(n,4)=2\sqrt{2}n^{3/2}+o(n^{1.3})$. (The exponent of the error term comes from known results on the gaps between primes.)

\item Our upper bound on $m(n,d)$ has implications for all finitely generated groups. Indeed, we have the following theorem:

\begin{theorem}[Stewart~\cite{Stewart}]
\label{thm:quotient}
Let $G$ be a finitely generated group with finite generating set  $\{g_1,g_2,\ldots ,g_n\}$ of cardinality $n$. Let $D$ be a DDC in $G$ of diameter at most $d$. Then $|D|\leq m(n,2d)$, where $m(n,2d)$ is the maximum size of a DDC of diameter at most $2d$ in the free group $F_n$ of rank $n$.
\end{theorem}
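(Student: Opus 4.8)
The plan is to push the configuration $D$ up to $F_n$ along the canonical surjection, using the fact that a group homomorphism can only collapse differences, never manufacture new coincidences in a set of controlled-length preimages. Write $F_n$ for the free group on a free generating set $\{y_1,\dots,y_n\}$, and let $\phi\colon F_n\to G$ be the (surjective) homomorphism determined by $y_i\mapsto g_i$. If $D$ is empty the claimed bound is trivial, so assume $D\ne\emptyset$. Since neither the diameter nor the DDC property of a subset of $G$ is changed by replacing it with a left translate, I would first replace $D$ by $d_0^{-1}D$ for some $d_0\in D$, so that we may assume $e\in D$. Then for each $h\in D$ we have $\dist(e,h)\le d$ in $G$, so $h$ equals a word $s_1s_2\cdots s_\ell$ in $G$ with each $s_i\in\{g_1^{\pm1},\dots,g_n^{\pm1}\}$ and $\ell\le d$; reading the corresponding sequence of $y_i^{\pm1}$ in $F_n$ produces an element $w_h\in F_n$ with $\phi(w_h)=h$ whose reduced length is at most $\ell\le d$. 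Fix one such $w_h$ for each $h$ (with $w_e=e$), and set $\widetilde D=\{w_h : h\in D\}\subseteq F_n$.

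Next I would verify that $\widetilde D$ is a DDC in $F_n$ of diameter at most $2d$ with $|\widetilde D|=|D|$. The map $h\mapsto w_h$ is injective, since $w_h=w_{h'}$ forces $h=\phi(w_h)=\phi(w_{h'})=h'$; hence $|\widetilde D|=|D|$. Each $w_h$ lies in the ball of radius $d$ about the identity of $F_n$, so any two elements of $\widetilde D$ are at distance at most $2d$, giving $\widetilde D$ diameter at most $2d$. For the DDC property, suppose $w_a^{-1}w_b=w_c^{-1}w_d$ for ordered pairs $(w_a,w_b)\ne(w_c,w_d)$ of distinct elements of $\widetilde D$; applying $\phi$ gives $a^{-1}b=c^{-1}d$ in $G$, and by injectivity of $h\mapsto w_h$ these are distinct ordered pairs of distinct elements of $D$, contradicting that $D$ is a DDC in $G$. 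Therefore $\widetilde D$ is a DDC of diameter at most $2d$ in $F_n$, and $|D|=|\widetilde D|\le m(n,2d)$, as required.

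The argument presents no genuine obstacle; the single point that needs care is the DDC step, where one must first observe that a collision among the differences of $\widetilde D$ necessarily involves \emph{distinct} ordered pairs of distinct elements before invoking the DDC property of $D$ — and this is exactly what injectivity of $h\mapsto w_h$ provides, ruling out the degenerate possibility that $(a,b)=(c,d)$ while the underlying $F_n$-pairs differ (which cannot happen, as $w_h$ depends only on $h$). The factor of $2$ is the only loss in the reduction: the lifts $w_h$ record geodesics from the single basepoint $e$ rather than from a common midpoint, so $w_a^{-1}w_b$ can have length as large as $|w_a|+|w_b|\le 2d$; whether this factor can be improved for arbitrary $G$ is a natural question, but it is not needed for the stated bound.
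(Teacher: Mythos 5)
Your proof is correct and follows essentially the same route as the paper: lift $D$ to the free group via the homomorphism $\phi$ sending free generators to the $g_i$, choosing preimages of length at most $d$ from a common basepoint, so the lift has the same cardinality, diameter at most $2d$, and inherits the DDC property because $\phi$ carries any coincidence of differences back to one in $D$. Your normalisation $e\in D$ is just a translated version of the paper's choice of anchoring the lifts at $\hat{h}_1$, and your explicit verification of the DDC step (via injectivity of $h\mapsto w_h$) simply spells out what the paper leaves to the reader.
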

\begin{proof}
Let $\{x_1,x_2,\ldots,x_n\}$ be a free generating set for $F_n$. Let $\phi:F_n\rightarrow G$ be the (unique) homomorphism such that $\phi(x_i)=g_i$ for $i\in\{1,2,\ldots,n\}$. Let $D=\{h_1,h_2,\ldots ,h_m\}$ be a DDC in $G$ of cardinality $m$ and of diameter at most $d$. Define $\hat{D}=\{\hat{h}_1,\hat{h}_2,\ldots ,\hat{h}_m\}\subseteq F_n$ as follows. Set $\hat{h}_1\in F_n$ to be any element such that $\phi(\hat{h}_1)=h_1$. For $i>1$, define $\hat{h}_i\in F_n$ to be any element such that $\phi(\hat{h}_i)=h_i$ and such that $\dist(\hat{h}_1,\hat{h}_i)\leq d$. We see that $\hat{D}$ is a DDC in $F_n$ of cardinality $|D|$ and diameter at most $2d$. Hence $|D|\leq m(n,2d)$ and the theorem follows.
\end{proof}

\begin{open}
Let $G$ be a finitely generated group with a finite generating set  $\{g_1,g_2,\ldots ,g_n\}$ of cardinality $n$. Let $D$ be a DDC in $G$ of diameter at most $d$. Is it the case that $|D|\leq m(n,d)$?
\end{open}
We note that this problem cannot be solved by naively pulling a DDC in $G$ back into the free group: in the notation of the theorem above, there are examples of subsets $D\subseteq G$ of diameter $d$ such that any preimage $\hat{D}\subseteq F_n$ with $\phi(\hat D)=D$ always has diameter greater than~$d$.

\item
Let $G$ be a group generated by a finite set $X$. We have defined the difference between two elements $g$ and $h$ in a group $G$ as $g^{-1}h$. We could define this as the \emph{right difference} between $g$ and $h$, and define the equally natural \emph{left difference} between $g$ and $h$ as $gh^{-1}$. So a \emph{left distinct difference configuration} is a subset $D$ whose left differences are distinct. (In fact, this is often the definition of a distince difference configuration used in the literature.) We define the \emph{left distance} between $g$ and $h$ as the length of the shortest expression of $gh^{-1}$ as a product of elements in $X\cup X^{-1}$, and the \emph{left diameter} of $D$ as the largest left distance between a pair of elements in $D$.  It is not hard to prove that that $D$ is a (right) distinct difference configuration of diameter $d$ if and only if $D^{-1}:=\{g^{-1}:g\in D\}$ is a left distinct difference configuration of left diameter $d$, so all the theory applies in this case. (Even though a right distinct difference configuration of right diameter $d$ is always a left distinct difference configuration, it might not have left diameter $d$. For example, the subset $D=\{x_1x_2,x_1x_3\}\subset F_3$ has right diameter $2$ but left diameter $4$.)
\end{enumerate}

\end{document}